\newtheorem{theorem}{Theorem}
\newtheorem{proposition}[theorem]{Proposition}
\newtheorem{lemma}[theorem]{Lemma}
\newtheorem{definition}[theorem]{Definition}
\date{}
\begin{document}

\title{Geometric constructions of small regular graphs with girth 7}

\author
{Gy\"orgy Kiss\footnote{This research was supported in part  by the Hungarian National Research, Development and Innovation Office  OTKA grant no. SNN 132625.}}

\maketitle

\begin{abstract}
We present simple, geometric constructions for small regular graphs of girth 7 from the incidence graphs of some generalized quadrangles. We obtain infinite families of $(q-1)$-regular, $q$-regular and $(q+1)$-regular graphs of girth 7, for $q$ a prime power. Some of them have the smallest order known so far. 
\end{abstract}

\noindent
\textbf{MSC:} 05C035, 51E12\\
\textbf{Keywords:} cage problem, incidence graph, generalized quadrangle 

\section{Introduction, definitions and basic properties}

The cage problem is a classical problem in extremal graph theory. In a regular graph of degree $k$ and girth $g$ we ask for the minimum number of vertices. 
Here we briefly summarize the most important results related to the problem.
For more details on cages we refer to the dynamic survey by Exoo and Jajcay \cite{ExooJaj08}.

A $(k,g)$-graph is a $k$-regular graph with girth $g.$ A \emph{$(k,g)$-cage} is a 
$(k,g)$-graph of minimum order. It was proved by Erd\H os and Sachs 
\cite{ErdSachs63} that $(k,g)$-cages exist for every $k\geq 2$ and $g\geq 3.$ The order of a $(k,g)$-cage is denoted by $n(k,g).$ For $k=2,$ the $(2,g)$-graphs are the $g$-cycles, so $n(2,g)=g.$ From now on, we assume that $k\geq 3.$ 
A general lower bound on $n(k,g),$ known as the Moore bound and denoted by 
$M(k,g)$, depends on the parity of $g.$ It is obtained by counting the vertices whose distance from a vertex (if $g$ is odd),
or an edge (if $g$ is even) is at most $\lfloor (g-1)/2\rfloor $.  
$$n(k,g)\geq M(k,g)=\left\{ 
\begin{array}{ll}
1+\sum_{i=0}^{(g-3)/2}k(k-1)^i=\frac{k(k-1)^{(g-1)/2}-2}{k-2}, 
& \text{if $k$ is odd,} \\
2\sum_{i=0}^{(g-2)/2}(k-1)^i=\frac{2(k-1)^{g/2}-2}{k-2},  & \text{if $k$ is even,}   
\end{array}
\right. $$
Graphs attaining this bound 
are called Moore graphs. For $g=3$ and $4,$ Moore graphs are the complete, and complete bipartite graphs, respectively. Moore graphs are rare  for $g>4$. When $g$ is even, then
there exists a $k$-regular Moore graph with girth $2r>4$ if and only if there exists a
finite generalized $r$-gon of order $(k-1,k-1),$ and the graphs are the incidence graphs of the generalized polygons. When $g>4$ odd, then there exist Moore graphs only for $g=5$ and $k=2,3,7$ and possibly $57.$

Several different constructions of small $(k,g)$-graphs were presented in the last few decades. In most cases, $g=6,8$ or $12$ and the graph is related to the incidence graph of a generalized $g/2$-gon. A comprehensive overview of geometric constructions can be found in the paper by Dam\'asdi, H\'eger and Sz\H onyi \cite{DamHegSzo}. 

For $g=5,$ some classes of graphs were constructed by careful manipulations
of the incidence graphs of finite projective planes. Some vertices are removed from 
these $(q+1,6)$-cages, after that matchings or cycles are added to the neighbours of
the removed vertices to get back regularity. The first construction due to Brown \cite{Br-5} who proved that if $q\geq 5$ is a prime power, then $n(q+2,5)\leq 2q^2.$ More sophisticated methods were presented by Abreu et al. \cite{abr+3}, and 
Abajo and her co-authors in a series of papers
\cite{ab-1, ab-2, ab-3}. The exact orders of their $(k,5)$-graphs are a bit
smaller than $2k^2$, but the magnitude is still  $2k^2$.
Recently, Araujo-Pardo and Leemans \cite{AL22} and Araujo-Pardo, Kiss and 
Porups\'anszki \cite{AKP} presented a very simple geometric 
construction for $(q+2,5)$-graphs whose order is $2q^2-2$. 

For $g=7,$ only three infinite families of $(k,7)$-graphs
whose order is approximately twice the Moore bound (which is
$k^3-k^2+k+1$) were known. A more general construction of Lazebnik, Ustimenko and Woldar \cite{LUW} results in $(q,7)$-graphs of order $2q^3,$ and $(q-1,7)$-graphs of order $2q^3-2q^2$ for each odd prime power $q>3.$ The smallest $(q+1,7)$-graphs known so far, were presented by Abreu et al. \cite{abr+4}. They proved that
$$n(q+1,7)\leq \left\{ 
\begin{array}{ll}
2q^3+q^2+2q, & \text{if $q\geq 4$ is an even prime power,} \\
2q^3+2q^2-q+1, & \text{if $q\geq 5$  is an odd prime power.}  
\end{array}
\right. $$

In this paper, constructions of $(k,7)$-graph for $k=q-1,q,q+1$ are given for each prime power $q>3.$  The main idea is the same as described in the previous paragraph, 
but our method is purely geometric. We choose a beautiful geometric substructure of a suitable generalized quadrangle. After that delete the set of the corresponding vertices from the incidence graph. Finally, again using the geometric properties, define some new edges. For $k=q-1$ and $k=q$ and large $q,$ these graphs are the smallest $(k,7)$-graphs known so far. 
Comparing the orders of the $(q+1,7)$-graphs presented in Theorems \ref{main} and
\ref{paros-jobb} and those constructed by Abreu et al., they are the same if $q$ is even, and the orders of the ones we made are slightly larger if $q$ is odd. However, our construction can be described much more briefly and is much simpler.

For a detailed introduction to generalized polygons and the concepts from finite geometries we use can be found for example in \cite{KSz} and \cite{VM98}. 
In particular, for generalized quadrangles we also refer the book \cite{PT}.
Here we give only the most necessary definitions.
 
A finite \emph{generalized quadrangle} of order $(s,t)$, 
shortly a $GQ(s,t),$ is a point-line incidence structure which satisfies the following axioms:
\begin{itemize}
\item for any two distinct points, there is at most one line incident with both;
\item every line is incident with exactly $s+1$ points; 
\item every point is incident with exactly $t+1$ lines;
\item for every point $P$ and every line $\ell$ not incident with $P$, there is exactly one line through $P$ which intersects $\ell$.
\end{itemize}
The \emph{incidence graph}, also called as the \emph{Levi graph}, of a point-line incidence structure is a bipartite graph whose bipartition sets correspond to the
set of points and set of lines, respectively, and there is an edge between two vertices if
and only if the corresponding point is incident with the corresponding line.  Throughout this paper, if $G$ is an incidence graph, then its vertices will be called points and lines, 
and the type of a vertex is either point or line, according to whether they correspond to a point or a line of the geometric structure.

\section{The constructions}

We use four families of generalized quadrangles. These are $\mathcal{W}(q),$
$\mathcal{T}_2(\mathcal{O}),$ $\mathcal{T}_2^*(\mathcal{H}),$ and 
$\mathcal{Q}(4,q)$. Each of them is a so-called projective GQ, which means that there exists a finite projective space 
$\mathcal{B}=\mathrm{PG}(d,q)$ for which the point-set of the GQ is a subset of the set of points of $\mathcal{B}$, the line-set of the GQ is a subset of the set of lines of 
$\mathcal{B}$, and the incidence in the GQ is the one inherited from $\mathcal{B}.$ 
We always consider $\mathrm{PG}(d,q)$ as 
$\mathrm{AG}(d,q)\cup \Sigma_{\infty }.$ The points of $\mathrm{PG}(d,q)$
are coordinatized by homogeneous coordinates $(x_0:x_1:\dots :x_d)$ in the usual way.
The equation of $\Sigma_{\infty }$ is $X_0=0.$ Points and lines in $\Sigma_{\infty }$ 
are called points at infinity and lines at infinity, respectively. If $d=3,$ then
in $\mathrm{AG}(3,q)$ we also use the Cartesian coordinates, $X=X_1/X_0,$ 
$Y=X_2/X_0$ and $Z=X_3/X_0.$ We say that a line or a plane is horizontal, if it is
parallel with the plane $Z=0.$

First, we present some statements that are fulfilled in all GQs.
\begin{definition}
Let $G=(V,E)$ be a $t$-regular induced subgraph of the Levi graph of a $GQ(s,s).$ 
A subset of vertices $W\subset V$ is called \emph{deleteable} if it has the following 
two properties:
\begin{itemize}
\item
if $W$ contains two intersecting lines whose point at intersection is $P,$ then 
$W$ contains $P$ and all the $t$ lines through $P;$
\item
if $W$ contains two collinear points whose joining line is $\ell ,$ then 
$W$ contains $\ell $ and all the $t$ points on the line $\ell .$
\end{itemize}
\end{definition}

The proof of the following proposition is straightforward.
\begin{proposition}
\label{parositashoz}
Let $G=(V,E)$ be a $t$-regular induced subgraph of the Levi graph of a $GQ(s,s),$
and $W\subset V$ be a deleteable subset. 
Then the subgraph $G'$ of $G$ induced by the set of vertices $V\setminus W$
contains two types of vertices. If a vertex has a neighbour in $W,$ then its valency 
in $G'$ is $t-1$, while if a vertex has no neighbour in $W,$ then its valency  in $G'$ is 
$t.$
If $t$ is odd, then the set of the valency $t-1$ vertices of $G'$ admits perfect matchings such that any new edge joins either two points on a deleted line, or two lines through a deleted point.  
\end{proposition}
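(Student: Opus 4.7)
The plan is to deal with the two assertions separately. For the valency claim, I would start from the fact that any $v \in V \setminus W$ has $t$ neighbours in $G$ (by $t$-regularity) and loses $|N_G(v)\cap W|$ of them in the induced subgraph $G'$, so the only thing to prove is $|N_G(v)\cap W|\le 1$. If $v$ is a point with two distinct neighbours $\ell_1,\ell_2\in W$, then $\ell_1$ and $\ell_2$ are two lines of $W$ intersecting at $v$, and the first deleteable axiom forces $v\in W$, a contradiction; the dual argument with the second axiom handles the case where $v$ is a line. Hence every vertex in $V\setminus W$ has $0$ or $1$ neighbour in $W$, giving valency $t$ or $t-1$ respectively.

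For the matching claim, I would partition the set $U$ of valency-$(t-1)$ vertices via the map sending each $v\in U$ to its unique neighbour in $W$. Writing $A_w$ for the fibre over $w\in W$: if $w$ is a deleted line, $A_w$ is the set of non-deleted points on $w$; if $w$ is a deleted point, $A_w$ is the set of non-deleted lines through $w$. Matching within a single $A_w$ automatically produces edges of the required form, so it suffices to perfectly match each star separately. A direct application of the second deleteable axiom shows that for a deleted line $w$ the number of points of $w$ lying in $W$ (out of its $t$ points in $V(G)$) is $0$, $1$, or $t$: if two of them lay in $W$, the axiom would pull the remaining ones into $W$ as well. Consequently $|A_w|\in\{0,t-1,t\}$, and symmetrically for deleted points.

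Since $t$ is odd, $t-1$ is even, so each star $A_w$ of size $0$ or $t-1$ admits a perfect matching obtained by pairing its elements arbitrarily, and the disjoint union of these matchings over $w\in W$ gives the desired perfect matching on $U$. The one case this argument does not cover is $|A_w|=t$, which occurs exactly when a deleted line contains no deleted point (or dually) and would break the parity count; this is the main obstacle in a fully general reading of the proposition. I expect it to be excluded by the specific geometric constructions of the next sections, where every deleted line meets the deleted set in a point and every deleted point lies on at least one deleted line, so that $|A_w|\in\{0,t-1\}$ throughout and the parity argument goes through with no further effort.
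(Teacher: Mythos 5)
Your argument is the natural one, and since the paper dismisses this proposition with the single sentence ``the proof is straightforward,'' there is no written proof to compare against; what you wrote is surely the intended argument. The first half (each vertex of $V\setminus W$ has at most one neighbour in $W$, by applying the two deleteability axioms to a point on two deleted lines or a line through two deleted points) is exactly right and is the only real content of the valency claim. The reduction of the matching claim to the fibres $A_w$ is also forced: a new edge must join two points on a common deleted line, and since each surviving point lies on at most one deleted line of $G$ (by the first half, using that $G$ is an induced subgraph), any admissible matching edge stays inside a single fibre. Your count $|A_w|\in\{0,t-1,t\}$ via the deleteability axioms is correct.

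The caveat you raise is not a defect of your proof but a genuine imprecision in the statement itself, and you are right to flag it: a single line with none of its points is a deleteable set, and for odd $t$ it leaves $t$ vertices of valency $t-1$ on one fibre, so no perfect matching of the required form can exist. The proposition therefore needs the supplementary hypothesis that every deleted line carries a deleted point and every deleted point lies on a deleted line (equivalently, that no fibre has the full size $t$). Your expectation that this holds in the paper's applications checks out: in the only construction that invokes the odd-$t$ matching (Theorem \ref{paros-jobb}, with $t=q+1$ odd), every deleted line meets the totally deleted line $X_0=X_3=0$ in exactly one deleted point and every deleted point lies on that line, so all fibres have size $0$ or $q$, which is even. (In the even-degree constructions the fibres have even size $k$ for trivial reasons and the parity issue does not arise, though there the ``$t$ odd'' hypothesis is not the one being used.) So your proof is complete once that extra hypothesis is added, and it would be worth stating it explicitly rather than leaving it implicit in the word ``deleteable.''
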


\begin{lemma}
\label{legalabb6}
Let $G=(V,E)$ be a $t$-regular induced subgraph of the Levi graph of a $GQ(s,s)$ 
where $s+1\geq t$ and $t$ is odd. Let $W$ be a deleteable subset of vertices of $G.$ 
Let $G'$ be the subgraph of $G$ induced by the set of vertices $V\setminus W.$
Add a perfect matching of the valency $t-1$ vertices of $G'$ such that any new edge joins either two points on a deleted line, or two lines through a deleted point. Then the new graph $\Gamma $ is $t$-regular and its girth is at least 6.

Moreover, if $\mathcal{C}$ is a $6$-cycle in $\Gamma ,$ then $\mathcal{C}$
contains exactly two new edges and $\mathcal{C}$ can be given either as
$u_1v_2v_3u_4v_5v_6$ or $u_1v_2v_3u_4u_5v_6$ where the vertices denoted by $u_i$ are of the same type and the vertices denoted by $v_j$ are of the other type.
\end{lemma}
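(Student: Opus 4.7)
My plan is to exploit the bipartite structure of the ambient Levi graph: label each vertex by its type (point or line); old edges of $G'$ toggle this label while new matching edges preserve it. Hence any cycle of length $L$ in $\Gamma$ with $k$ new edges satisfies $L - k$ even, and since the matching edges are independent, at most $\lfloor L/2 \rfloor$ of them can appear in such a cycle. These two constraints drastically restrict what each short cycle can look like, and I will refute each surviving case using an axiom of the $GQ(s,s)$.

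Regularity is immediate from Proposition \ref{parositashoz}, since the perfect matching promotes every valency $t-1$ vertex back to valency $t$ without disturbing the valency $t$ vertices. For the girth bound I would treat $L = 3, 4, 5$ in turn. For $L = 3$, parity and matching force exactly one new edge, which yields two points on a deleted line that are also collinear on a surviving line, violating the uniqueness of the join of two points in a GQ. For $L = 5$, the only surviving configuration (one new edge, four old ones) assembles three distinct points pairwise joined by three lines, i.e.\ a hexagon in the Levi graph of the GQ, ruled out by its girth $8$. For $L = 4$, the subcase $k = 0$ is blocked again by the Levi-graph girth and $k = 4$ by the matching bound, so $k = 2$ with the two new edges placed oppositely in the cycle; here I would finish by invoking the GQ axiom that through any point not on a line $\ell$ exactly one line meets $\ell$, applied to a deleted point $P$, a deleted line $\ell$, and the two distinct surviving lines through $P$ that the cycle forces to meet $\ell$ at surviving points.

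For the structure statement about $6$-cycles, the same parity and matching analysis forces $k = 2$: $k = 0$ would produce a hexagon in the Levi graph, while $k = 4$ and $k = 6$ both exceed the matching bound $\lfloor 6/2\rfloor = 3$ combined with the parity requirement. Two independent edges in a $6$-cycle occur in exactly two relative positions, diametrically opposite or at gap pattern $\{1,3\}$, and walking around the cycle while tracking the bipartition type reproduces precisely the two templates $u_1 v_2 v_3 u_4 v_5 v_6$ and $u_1 v_2 v_3 u_4 u_5 v_6$ claimed in the lemma, with the $u$'s and $v$'s then forced into opposite parts of the bipartition.

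The crux of the proof sits in the $L = 4, k = 2$ subcase: every other subcase is either impossible on a parity or matching count, or reduces directly to the fact that the Levi graph of a GQ has girth $8$, but this one genuinely requires the stronger uniqueness axiom on transversals through an external point, so this is the step I would set up most carefully and for which I would verify explicitly that the deleted point $P$ does not lie on the deleted line $\ell$.
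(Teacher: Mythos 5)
Your proposal is correct, and its second half (the classification of $6$-cycles via parity of types and the two possible relative positions of two independent matching edges) is essentially the argument in the paper. Where you diverge is in establishing girth at least $6$: you run a case analysis over $L=3,4,5$ and the number $k$ of new edges, refuting each surviving configuration by a direct appeal to a GQ axiom (uniqueness of the line joining two points for $L=3$, absence of triangles for $L=5$, and the transversal axiom through an external point for the $L=4$, $k=2$ case, where you rightly flag the need to check $P\notin\ell$ -- which does hold, since otherwise a surviving line of the cycle would meet the deleted line $\ell$ in two points and hence coincide with it). The paper instead uses a single uniform trick: every new edge $xy$ arises from a path $x z y$ of length $2$ through a deleted vertex $z\in W$, so replacing the (at most two) new edges of a cycle of length $\ell\le 5$ by such paths produces a cycle of length at most $\ell+2\le 7$ in $G$, contradicting girth $8$ of the Levi graph in one stroke. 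The two arguments are mathematically equivalent (your axiom-by-axiom refutations are exactly the unfolded forms of the forbidden short cycles in the Levi graph), but the paper's subdivision argument avoids your case distinctions and, in particular, dissolves what you call the crux ($L=4$, $k=2$): after replacement it is just a $6$-cycle in $G$. Your version buys a more self-contained, axiom-level verification at the cost of length; if you keep it, remember to also state the dual configurations (two lines through a deleted point) in the $L=3$ and $L=5$ cases, which you currently phrase only for points on a deleted line.
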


\begin{proof}
By Proposition \ref{parositashoz}, the required perfect matching exists, there is exactly one new edge through each neighbour of a deleted vertex, hence $\Gamma $ is 
$t$-regular.
 
Suppose that $\Gamma $ contains a cycle $\mathcal{C}$ of length $\ell \leq 6$. 
The girth of $G$ is $8,$ so $\mathcal{C}$ must contain some new edges. There is at most one new edge through any vertex of $\Gamma ,$ so $\mathcal{C}$ cannot contain
3 consecutive vertices of the same type.

Hence if $\ell \leq 5,$ then $\mathcal{C}$
contains at most 2 new edges. If $xy$ is a new edge of $\Gamma ,$ then, by definition,
there exists a vertex $z\in W$ such that both $xz$ and $zy$ are edges
in $G.$ Changing the the new edge(s) of $\Gamma $ to path(s) of length 2 in $G,$ the $\ell $-cycle $\mathcal{C}$ results in a cycle $\mathcal{C}'$ of length 
$\ell '\leq \ell +2\leq 7$ in $G,$ a contradiction.

Now, let $\mathcal{C}=u_1v_2v_3u_4w_5w_6$ be a $6$-cycle in $\Gamma .$  
We may assume without loss of generality that the vertices denoted by $v_i$ are of the same type, hence the vertices denoted by $u_j$ are of the other type. If $w_5$ and $u_4$ are
the same type, then $w_6$ must be other type. So $\mathcal{C}$ contains the two new edges $v_2v_3$ and $u_4w_5,$ one of the new edges joins two points, the other
joins two lines. 
If $w_5$ and $u_4$ are different types and $w_6$ is the same type as $w_5,$ then
$\mathcal{C}$ contains the two new edges edges $v_2v_3$ and $w_5w_6.$ The four 
vertices on the two new edges are of the same type. Finally, if $w_5$ and $u_4$ are of different types and $w_6$ is the same type as $u_4,$
then $\mathcal{C}$ contains the two new edges edges $v_2v_3$ and $w_6u_1.$ 
In this case one of the new edges joins two points, the other joins two lines. 
\end{proof}

\begin{lemma}
\label{legalabb6-2}
Let $G$ be a $k$-regular induced subgraph of the Levi graph of a generalized 
quadrangle $\mathcal{Q}$ of order $(s,s).$ 
Suppose that $k\geq 7$ and $\mathcal{Q}$ contains a $k\times k$ grid $\mathcal{G}.$ 
Let $\{ e_1,e_2,\dots ,e_k\} $ and  $\{ f_1,f_2,\dots ,f_k\} $ denote the
two sets of pairwise non-intersecting lines of $\mathcal{G},$ and let $P_{i,j}$
be the point of intersection of the lines $e_i$ and $f_j.$
Delete the $2k$ lines of $\mathcal{G}$ from $G.$  After that add
$k\times k$ new edges joining the pairs of points $(P_{i,j},P_{i,j+1})$ and  
$(P_{i,k},P_{i,1})$ for all $i=1,2,\dots ,k$ and $j=1,2\dots , k-1.$ Denote the new graph by $\Gamma $. Then $\Gamma $ is $k$-regular and its girth is at least 7.
\end{lemma}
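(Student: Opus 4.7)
The plan is to verify the two conclusions separately. For $k$-regularity, I would observe that the only vertices whose degrees change are the grid points $P_{i,j}$: each loses its two grid-line neighbours $e_i$ and $f_j$ and gains exactly two new edges, to $P_{i,j-1}$ and $P_{i,j+1}$ (indices modulo $k$), so its degree returns to $k$. All other vertices of $G$ are unaffected, and the deleted grid lines disappear from $\Gamma$ entirely.

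For girth at least $7$, I would argue by contradiction and suppose $\mathcal{C}$ is a cycle of length $\ell\le 6$ in $\Gamma$. Let $m$ be the number of new edges in $\mathcal{C}$; since new edges join two points while Levi edges join a point and a line, bipartite parity forces $\ell\equiv m\pmod{2}$. The key geometric fact, extracted from the GQ axiom that for any point $P$ not on a line $\ell$ there is a unique line through $P$ meeting $\ell$, is that \emph{any line of $\mathcal{Q}$ passing through two distinct grid points is itself a grid line}. Indeed, if $P_{i_1,j_1}$ and $P_{i_2,j_2}$ with $i_1\ne i_2$ and $j_1\ne j_2$ were collinear, the line through them would be the unique line through $P_{i_1,j_1}$ meeting $e_{i_2}$, namely $f_{j_1}$, forcing $j_2=j_1$, a contradiction. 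Since all grid lines are deleted, no line of $\Gamma$ passes through two grid points.

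I would then split $\mathcal{C}$ into maximal runs of consecutive new edges and the Levi-edge \emph{gaps} between them (say $R$ of each); consecutive new edges share a grid point, hence a row, so each run lies in a single row. Each gap is a path in the Levi graph between two grid points, so it has even length at least $2$, and the italicised observation rules out any gap of length exactly $2$. Together with $\ell\le 6$, this leaves only the following possibilities: $m=0$ gives a cycle of length $\le 6$ in the Levi graph, contradicting girth $8$; $m=\ell$ gives a cycle inside a single row's Hamiltonian $k$-cycle, contradicting $k\ge 7$; every configuration with $R\ge 2$ either violates $\ell-m\ge 2R$ or forces a gap of length exactly $2$; and the remaining cases are $R=1$ with a single gap of $4$ Levi edges and $(\ell,m)\in\{(5,1),(6,2)\}$.

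The main obstacle is handling these two surviving single-run cases. In each one I would replace every new edge $\{P_{i,j},P_{i,j+1}\}$ by the path $P_{i,j}-e_i-P_{i,j+1}$ in the Levi graph, producing a closed walk of length $\ell+m$. For $(\ell,m)=(6,2)$ both replacements involve the same $e_i$ (the run sits in one row), creating a backtrack $e_i-P_{i,a+1}-e_i$ that I would cancel, reducing the walk to length $6$ on the distinct vertices $P_{i,a},e_i,P_{i,a+2},L_1,P',L_2$; for $(\ell,m)=(5,1)$ the walk already has length $6$ on distinct vertices and no repeated edges. In either case the resulting closed walk has no repeated edges and total length $6<8$, so it must be a simple $6$-cycle (an edge-disjoint union of two or more cycles would have total length $\ge 16$), contradicting the girth $8$ of the Levi graph.
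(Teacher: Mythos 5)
Your proof is correct and follows essentially the same route as the paper: you rule out all-new-edge cycles via $k\geq 7$, handle a single maximal run of new edges by replacing it with a path of length $2$ through the deleted row line $e_i$ to produce a short cycle in the Levi graph, and kill configurations with two or more runs by the GQ axiom showing that no surviving line passes through two grid points. Your version is somewhat more systematic than the paper's (explicit run/gap bookkeeping, and a careful check that the substituted closed walk is edge-disjoint and hence yields a genuine short cycle, a point the paper glosses over), but the ingredients and case split are the same.
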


\begin{proof}
The proof is similar to the proof of the previous lemma. 

For any fixed $i,$ the new edges among the vertices $P_{i,m}$ form a cycle of length
$k.$ So there are exactly two new edges through each point of $\mathcal{G},$ hence 
$\Gamma $ is $k$-regular.

Suppose that $\Gamma $ contains a cycle $\mathcal{C}$ of length $\ell \leq 6$. 
The girth of $G$ is $8,$ so $\mathcal{C}$ must contain some new edges. Since $k>6,$
therefore $\mathcal{C}$ cannot consists of only new edges. If $u_1u_2\dots u_i$ is the only arc of
consecutive new edges in $\mathcal{C}$, then $u_1$ and $u_i$ are collinear points in 
$\mathcal{Q}$, both are incident with a line $v.$ Thus replacing $u_1u_2\dots u_i$
by $u_1vu_i$ in $\mathcal{C}$ results in a
cycle $\mathcal{C}'$ of length $\ell '\leq \ell +1\leq 7$ in $G,$ a contradiction.
Hence $\mathcal{C}$ must contain at least two non-consecutive new edges.
This implies that $\mathcal{C}$ is a $6$-cycle, and it consists of the vertices
$u_1u_2v_3u_4u_5v_6$ where each $u_i$ denotes a point and each $v_j$ 
denotes a line. So, by the definition of the new edges, $u_2$ and $u_4$ are points on two non-intersecting lines of $\mathcal{G}.$ If $u_2=P_{i,m}$ and $u_4=P_{j,n},$
with $i\neq j,$ then for $m\neq n$ they are not collinear in $G$, while for $m=n$
they are on the deleted line $f_m.$ In both cases, 
they do not have a common neighbour in $\Gamma .$

This contradiction proves the statement.
\end{proof}

Let $k>2$ be an even integer and consider 
the $k\times (k-1)$ part of the plane lattice of integers  
$$\mathcal{G}=\{ (i,j)\colon 1\leq i\leq k-1, \, 1\leq j\leq k\} .$$ 

\begin{definition}
A perfect matching of the set of vertices of $\mathcal{G}$ is \emph{rectangle-free,}
if the four vertices of no two edges form a rectangle on the plane.
\end{definition}

\begin{lemma}
\label{rfree}
 $\mathcal{G}$  admits a rectangle-free matching. 
\end{lemma}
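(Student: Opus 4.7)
The plan is to produce a matching whose edges are all \emph{vertical}, i.e.\ of the form $\{(i,a),(i,b)\}$ with both endpoints sharing the first coordinate. Under this restriction, specifying the matching amounts to choosing, for each $i\in\{1,\ldots,k-1\}$, a perfect matching $M_i$ of the row-set $\{1,2,\ldots,k\}$; such $M_i$ exist because $k$ is even. Two vertical edges lying in one common column can never form a rectangle (only one distinct first coordinate appears), while two vertical edges from distinct columns $i_1\ne i_2$ span a rectangle if and only if $M_{i_1}$ and $M_{i_2}$ share a common pair. So the rectangle-free condition is equivalent to the collection $M_1,\ldots,M_{k-1}$ being pairwise edge-disjoint when regarded as subgraphs of $K_k$.

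An edge count shows that disjointness in fact forces the $M_i$ to exhaust $E(K_k)$: there are $(k-1)\cdot k/2=\binom{k}{2}$ edges in the union. This is exactly the data of a $1$-factorization of $K_k$, which exists for every even $k$ (the classical Lucas round-robin construction). I would therefore fix such a factorization $F_1,\ldots,F_{k-1}$ and define the matching on $\mathcal{G}$ by declaring, for each $i$ and each pair $\{a,b\}\in F_i$, the edge $\{(i,a),(i,b)\}$ to belong to $M$.

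The verification is then essentially a restatement of the setup: every vertex $(i,j)$ is matched exactly once because $F_i$ is a perfect matching of $\{1,\ldots,k\}$; and any rectangle formed by two edges $\{(i_1,a_1),(i_1,b_1)\}$ and $\{(i_2,a_2),(i_2,b_2)\}$ of $M$ would require two distinct first coordinates and two distinct second coordinates, forcing $i_1\ne i_2$ and $\{a_1,b_1\}=\{a_2,b_2\}$. This last equality puts the edge $\{a_1,b_1\}$ into both $F_{i_1}$ and $F_{i_2}$, contradicting the disjointness of the factors.

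There is no real obstacle; the only creative step is recognizing that the rectangle-free matching problem on the $(k-1)\times k$ integer grid translates verbatim into the existence of $k-1$ edge-disjoint perfect matchings on $K_k$, i.e.\ a $1$-factorization, which is a classical object available precisely when $k$ is even.
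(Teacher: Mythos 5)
Your proposal is correct and is essentially identical to the paper's own proof: both fix a one-factorization $F_1,\dots,F_{k-1}$ of $K_k$, place the edges of $F_i$ vertically in column $i$, and observe that a rectangle would force some pair $\{a,b\}$ to lie in two distinct factors. No substantive difference.
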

\begin{proof}
Label the vertices of the complete graph $K_k$ by the natural numbers 
$1,2,\dots ,k$ and let
$\mathcal{F}=\{ F_1,F_2,\dots ,F_{k-1}\} $ be a one-factorization of $K_k$.
Define a matching on $\mathcal{G}$ in the following way.
The points $(i_1,j_1)$ and $(i_2,j_2)$ form a pair if and only if
$i_1=i_2$ and the edge $j_1j_2$ of $K_k$ belongs to the one-factor $F_{i_1}.$

By definition, this matching contains $k(k-1)/2$ edges, all of them are vertical. The edges on a fixed vertical line of the lattice (say the line with equation $X=i$), correspond to the $k/2$ edges contained in a fixed one-factor of $\mathcal{F}$ (in particular, $F_i)$. Each one-factor contains all vertices of  $K_k,$ so the matching is perfect. If there were a rectangle in the matching, say 
$(i_1,j_1)(i_1,j_2)$ and $(i_2,j_1)(i_2,j_2),$ then the edge $j_1j_2$ of $K_k$ 
would belong to both one-factors $F_{i_1}$ and $F_{i_2},$ a contradiction.
\end{proof}

\begin{lemma}
\label{construct}
Let $G$ be a $k$-regular induced subgraph of the Levi graph of a generalized quadrangle 
$\mathcal{Q}.$ Suppose that $k$ is even and $\mathcal{Q}$ contains a 
$k\times (k-1)$ grid. Let $\{ e_1,e_2,\dots ,e_k\} $ and  $\{ f_1,f_2,\dots ,f_{k-1}\} $ denote the two sets of pairwise non-intersecting lines of $\mathcal{G},$ and let $P_{i,j}$
be the point of intersection of the lines $e_i$ and $f_j.$
Delete the $k-1$ pairwise non-intersecting 
lines of the grid and let $G'$ denote the incidence graph of the remaining structure.
Identify the points $P_{i,j}$ with the lattice points $(i,j)$ and consider the rectangle-free perfect matching constructed in Lemma \ref{rfree}. Add the corresponding new edges 
to $G'$. Let $\Gamma $ denote the graph $G'$ extended by these new edges. Then 
$\Gamma $ is a $(k,g)$-graph with $g\geq 7.$
\end{lemma}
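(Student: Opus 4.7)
My plan is to adapt the pattern of Lemma \ref{legalabb6-2}: verify $k$-regularity directly, then suppose a short cycle $\mathcal{C}$ exists in $\Gamma$ and derive a contradiction either by splicing new edges into cycles of $G$ or, in the sharpest case, by invoking the fourth GQ axiom together with the rectangle-freeness of the matching. Regularity is immediate: each grid point $P_{i,j}$ loses only its edge to the deleted line $f_j$ and gains exactly one new edge from the matching, restoring its valency to $k$; no other vertex of $\Gamma$ is affected.

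\textbf{Setup for girth.} Suppose $\mathcal{C}$ is a cycle of length $\ell \leq 6$ in $\Gamma$, and let $m$ be the number of new edges along $\mathcal{C}$. Since the Levi graph of $\mathcal{Q}$ has girth $8$, we need $m \geq 1$. Two structural constraints organise the analysis: the new edges form a matching in $\Gamma$, so no two of them are adjacent on $\mathcal{C}$ and hence $m \leq \lfloor \ell/2 \rfloor$; and each old edge swaps the types ``point'' and ``line'' while each new edge preserves the type ``point,'' so closing up $\mathcal{C}$ forces $\ell - m$ to be even.

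\textbf{Small cases.} If $m = 0$, then $\mathcal{C}$ is a cycle of length at most $6$ in $G$, contradicting girth $8$. For $(\ell, m) \in \{(3,1), (5,1)\}$, substituting the unique new edge $P_{i_1, j} P_{i_2, j}$ by the length-$2$ path $P_{i_1, j}\, f_j\, P_{i_2, j}$ of the Levi graph of $\mathcal{Q}$ turns $\mathcal{C}$ into a cycle of length $\ell + 1 \leq 6$ in $G$; the intermediate vertex $f_j \notin V(\Gamma)$ cannot coincide with any $v_i$, so no collapse occurs, again contradicting girth $8$. For $(\ell, m) = (4, 2)$ the two non-adjacent new edges would force all four cycle vertices to be points, but the two remaining old edges need a line endpoint, a contradiction. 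All remaining pairs with $\ell \leq 5$, or with $\ell = 6$ and $m \neq 2$, are precluded either by $m \leq \lfloor \ell/2 \rfloor$ or by the parity of $\ell - m$.

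\textbf{Main case $\ell = 6$, $m = 2$.} I expect this to be the main obstacle. Parity and non-adjacency force the two new edges to lie at cyclic distance $3$, so up to relabelling $\mathcal{C} = v_1 v_2 v_3 v_4 v_5 v_6$ with new edges $v_1 v_2 \subset f_{j_1}$ and $v_4 v_5 \subset f_{j_2}$; the $v_i$ are points for $i \in \{1,2,4,5\}$ and lines for $i \in \{3,6\}$. The indices $j_1, j_2$ must differ: otherwise the distinct points $v_1, v_5$ would both lie on the deleted $f_{j_1}$, and in a GQ their only common line would be $f_{j_1} \notin V(\Gamma)$, leaving no candidate for $v_6$. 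Write $v_1 = P_{i_a, j_1}$, $v_2 = P_{i_b, j_1}$, $v_4 = P_{i_c, j_2}$, $v_5 = P_{i_d, j_2}$. Applying the fourth GQ axiom to the non-incident pair $(v_2, f_{j_2})$ singles out a unique line through $v_2$ meeting $f_{j_2}$, and $e_{i_b}$ manifestly has this property, so $v_3 = e_{i_b}$ and $i_c = i_b$; symmetrically $v_6 = e_{i_a}$ and $i_d = i_a$. Therefore the matching edges on $f_{j_1}$ and $f_{j_2}$ pair the same unordered pair $\{i_a, i_b\}$, meaning the one-factors $F_{j_1}$ and $F_{j_2}$ of $K_k$ share the edge $\{i_a, i_b\}$. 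Since distinct one-factors are edge-disjoint, this is the rectangle forbidden by Lemma \ref{rfree}, and the proof that $\Gamma$ has girth at least $7$ is complete.
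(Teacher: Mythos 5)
Your proof is correct and follows essentially the same route as the paper's: splice the new edges into length-$2$ paths through the deleted lines $f_j$ to exclude cycles of length at most $5$ (and $6$-cycles with the wrong number of new edges), then show that a $6$-cycle with two new edges forces a lattice rectangle, contradicting the rectangle-freeness from Lemma \ref{rfree}. The only differences are presentational: you argue self-containedly rather than delegating the girth-$\geq 6$ and $6$-cycle-structure steps to Lemma \ref{legalabb6} (which, as stated, assumes odd degree anyway), and you make explicit via the unique-transversal axiom why the two connecting lines of the $6$-cycle must be the grid lines $e_{i_a}$ and $e_{i_b}$ --- a step the paper's proof leaves implicit when it asserts that $B,C,E,F$ form a rectangle.
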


\begin{proof}
There is one new edge through each point of $G'$, so $\Gamma $ is $k$-regular.
Any new edge joins two points on a deleted line, so we can apply Lemma \ref{legalabb6}. This gives that the girth of $\Gamma $ is at least 6, and any $6$-cycle contains two
new edges.

Suppose that $\mathcal{C}$ is a $6$-cycle. Both new edges of $\mathcal{C}$ join
two points of $G'$, so again by Lemma \ref{legalabb6},
$\mathcal{C}=(a,B,C,d,E,F)$ where uppercase letters denote points of $G'$.
This means that $B,$ $C$, $E$ and $F$ form a rectangle in $G'$, contradicting to the fact that we defined a rectangle-free matching.
\end{proof}

In the following theorems, we construct small regular graphs using the unique, specific properties of each GQ.

\begin{theorem}
\label{main}
Let $q\geq 7$ be a prime power. Then
$$n(k,7)\leq \left\{ 
\begin{array}{ll}
2k^3-4k^2+2k, & \text{if $k=q+1$,} \\
2k^3-2k, & \text{if $k=q$.}  
\end{array}
\right. $$
\end{theorem}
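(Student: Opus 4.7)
The plan is to reduce both cases to Lemma~\ref{legalabb6-2} by choosing a suitable $k$-regular induced subgraph of the Levi graph of a $GQ(q,q)$ that contains a $k\times k$ grid.

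For the easier case $k=q+1$, take $G$ to be the entire Levi graph of $\mathcal{Q}(4,q)$, which is $(q+1)$-regular on $2(q+1)(q^2+1)$ vertices. A hyperplane of $\mathrm{PG}(4,q)$ meeting $\mathcal{Q}(4,q)$ in a hyperbolic quadric $\mathcal{Q}^+(3,q)$ exists, and the two reguli of $\mathcal{Q}^+(3,q)$ yield a $(q+1)\times(q+1)$ grid of lines of $\mathcal{Q}(4,q)$. Since $q\geq 7$ forces $k=q+1\geq 7$, Lemma~\ref{legalabb6-2} applies: deleting the $2(q+1)$ grid lines and adding the prescribed $(q+1)^2$ cyclic edges produces a $(q+1)$-regular graph of girth at least $7$. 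Its vertex count is $2(q+1)(q^2+1)-2(q+1)=2(q+1)q^2=2k(k-1)^2=2k^3-4k^2+2k$.

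The case $k=q$ is the heart of the argument, because one first has to manufacture a $q$-regular induced subgraph. I would work inside $\mathcal{T}_2(\mathcal{O})$ as follows. Fix an oval $\mathcal{O}\subset\Sigma_\infty$ of $\mathrm{PG}(3,q)$ and a distinguished point $P_0\in\mathcal{O}$. Let $G$ have as vertex set the $q^3$ affine points of $\mathrm{AG}(3,q)$ together with the $q\cdot q^2=q^3$ affine lines of $\mathcal{T}_2(\mathcal{O})$ whose direction lies in $\mathcal{O}\setminus\{P_0\}$; the symbol $\infty$, every tangent-plane point, every line corresponding to a point of $\mathcal{O}$, and the $q^2$ affine lines in direction $P_0$ are all discarded. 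A routine incidence count then shows that every remaining vertex has exactly $q$ remaining neighbors, so $G$ is a $q$-regular induced subgraph on $2q^3$ vertices. To locate a $q\times q$ grid inside $G$, pick distinct $D_1,D_2\in\mathcal{O}\setminus\{P_0\}$ and any affine plane $\pi$ with $\pi\cap\Sigma_\infty=D_1D_2$; the $q$ affine lines of $\pi$ in direction $D_1$ together with the $q$ affine lines of $\pi$ in direction $D_2$ form a $q\times q$ grid of $\mathcal{T}_2(\mathcal{O})$. Lemma~\ref{legalabb6-2} (again $k=q\geq 7$) then produces a $q$-regular graph of girth at least $7$ with $2q^3-2q=2k^3-2k$ vertices.

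The subtle point in the second construction is verifying that the two parallel classes in $\pi$ are genuinely pairwise non-concurrent in the GQ. Two distinct parallel $D_i$-lines of $\pi$ share no affine point; they could only share a common tangent-plane point of $\mathcal{T}_2(\mathcal{O})$, and such a plane would have to coincide with $\pi$ itself, forcing $\pi\cap\Sigma_\infty=D_1D_2$ to be tangent to $\mathcal{O}$, contradicting that $D_1D_2$ is the secant through $D_1$ and $D_2$. Once this transversality is confirmed, both vertex counts match the claimed bounds and the theorem follows by direct appeal to Lemma~\ref{legalabb6-2}.
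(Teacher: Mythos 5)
Your construction is essentially the paper's: case $k=q+1$ is verbatim the paper's argument (hyperbolic quadric section of $\mathcal{Q}(4,q)$ plus Lemma~\ref{legalabb6-2}), and your case $k=q$ is the paper's construction in different clothing --- the paper starts from a $q$-arc $\mathcal{A}\subset\Sigma_\infty$ and notes afterwards that $\mathcal{A}\cup\{P\}$ is an oval (or hyperoval for $q$ even), whereas you start from an oval $\mathcal{O}$ and delete $P_0$; the resulting $q$-regular induced subgraph on $2q^3$ vertices and the $q\times q$ grid in a plane over a secant are identical. Your transversality check (that parallel $D_1$-lines of $\pi$ cannot share a tangent-plane point of $\mathcal{T}_2(\mathcal{O})$ because $\pi\cap\Sigma_\infty$ is a secant) is a correct and worthwhile verification that the paper leaves implicit.

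There is, however, one genuine gap: Lemma~\ref{legalabb6-2} only gives girth \emph{at least} $7$, while the bound $n(k,7)\leq N$ requires a $k$-regular graph of girth \emph{exactly} $7$, and you never address this. For $k=q+1$ it is immediate, since $2q^3+2q^2 < M(q+1,8)=2(q^3+q^2+q+1)$ rules out girth $8$ or more; the paper says exactly this. But for $k=q$ the order $2q^3-2q$ \emph{exceeds} $M(q,8)=2\bigl(1+(q-1)+(q-1)^2+(q-1)^3\bigr)$, so the Moore-bound shortcut fails and you must exhibit a $7$-cycle. The paper does this explicitly: it takes three further arc points $C,D,E$, two points $U_1,U_2$ on a line through $A$, and builds a cycle $u_1u_2v_3u_4v_5u_6v_7$ using one new edge $u_1u_2$ and three lines of $\mathcal{S}$ through $C$, $D$, $E$. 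Without this (or an appeal to the monotonicity of $n(k,g)$ in $g$, which is itself a nontrivial theorem), your argument only bounds the order of some $k$-regular graph of girth in $\{7,8,\dots\}$, not $n(k,7)$.
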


\begin{proof}
The starting point of the construction is a $k$-regular induced subgraph $G$
of the Levi graph of a GQ in both cases.
We apply Lemma \ref{legalabb6-2}, and show that the girth of the extended graph $\Gamma $ is exactly $7.$

$(i)$: If $k=q+1$, then take a parabolic quadric $\mathcal{P}$ in 
$\mathrm{PG}(4,q).$ The points and lines entirely contained in $\mathcal{P}$ form the
classical generalized quadrangle $\mathcal{Q}(4,q)$, whose order is $(q,q)$. 
Let $\Sigma $ be a hyperplane such that
$\mathcal{P}\cap \Sigma $ is a hyperbolic quadric $\mathcal{H}$ (e.g: if $\mathcal{P}$ has equation $X_0^2+X_1X_2+X_3X_4=0,$
then the equation of $\Sigma $ could be $X_0=0$).
Then $\mathcal{P}$ contains the $2(q+1)$ lines and $(q+1)^2$ points of 
$\mathcal{H}$, and these points and lines form a $(q+1)\times (q+1)$ grid.
Let $G$ be the Levi graph of $\mathcal{Q}(4,q)$, take $\mathcal{G}=\mathcal{H}$ 
and apply Lemma \ref{legalabb6-2}. The constructed graph $\Gamma $ is a $(q+1)$-regular graph on 
$$2q^3+2q^2=2(q+1)^3-4(q+1)^2+2(q+1)$$ 
vertices and its girth is at least $7$.
The order of $\Gamma $ is less than $M(q+1,8)=2(q^3+q^2+q+1),$ so its girth
is $7.$

$(ii)$: If $k=q$, then take 
$\mathrm{PG}(3,q)=\mathrm{AG}(3,q)\cup \Sigma_{\infty }$ 
and let $\mathcal{A}$ be a $q$-arc in $\Sigma_{\infty }.$ 
We define a point-line incidence geometry $\mathcal{S}=(\mathcal{P},\mathcal{L},
\mathrm{I})$ in the following way. The points of 
$\mathcal{S}$ are the points of $\mathrm{AG}(3,q),$ the
lines of $\mathcal{S}$ are those affine lines of $\mathrm{PG}(3,q)$
whose point at infinity belongs to $\mathcal{A},$ and the incidence is the same as in  
$\mathrm{AG}(3,q).$ Let $G$ denote the Levi graph of $\mathcal{S}.$

When $q$ is odd, then there exists a point $P$ in $\Sigma_{\infty }$ such that 
$\mathcal{O}=\mathcal{A}\cup \{ P\} $ is an oval. This oval defines the 
GQ $\mathcal{T}_2(\mathcal{O}),$ and by definion, $G$ is a $q$-regular 
induced subgraph of the Levi graph of $\mathcal{T}_2(\mathcal{O}).$
Similarly, when $q$ is even, then there exist two points $P$ and $R$ in 
$\Sigma_{\infty }$ such that $\mathcal{H}=\mathcal{A}\cup \{ P,R\} $ is a hyperoval. 
This hyperoval defines the 
GQ $\mathcal{T}_2^*(\mathcal{H}),$ and by definion, $G$ is a $q$-regular 
induced subgraph of the Levi graph of $\mathcal{T}_2^*(\mathcal{H}).$
In both cases, let $AB$ be a secant line of $\mathcal{A}$ and $\Pi $ be an affine plane whose line at infinity is $AB.$ 
The $q$ lines of $\mathcal{S}$ through $A$ form a class of parallel lines in $\Pi ,$
and so do the $q$ lines of $\mathcal{S}$ through $B,$ too. Hence the $2q$ lines of 
$\mathcal{S}$ in $\Pi $ form a $q\times q$ grid.  

We can apply Lemma \ref{legalabb6-2}. The constructed graph $\Gamma $ is a 
$q$-regular graph on $2q^3-2q$ vertices and its girth is at least $7$. We show 
that $\Gamma $ contains $7$-cycles. Let $C,$ $D,$ $E$ be three distinct points
of $\mathcal{A}\setminus \{ A,B\} .$ Since $q\geq 7 ,$ these points exist.
Take two affine points $U_1$ and $U_2$ on a line through $A.$ Then 
$C,\, E$ and $D$ are not collinear, hence the plane
$\langle U_1,C,E \rangle $ intersects the line $U_2D$ in a single point $U_4.$
The lines $U_1C$ and $U_4E$ are coplanar, so they have a unique affine point of intersection, $U_6.$ Let $u_i$ denote the vertex in $\Gamma $ that corresponds to the point
$U_i,$ and let $v_3, \, v_5$ and $v_7$ be the vertices corresponding to the lines
$U_2U_4,\, U_4U_6$ and $U_6U_1,$ respectively. Then, by definition, $u_1u_2v_3u_4v_5u_6v_7$ is a $7$-cycle in $\Gamma .$ 
\end{proof}

We can combine Part $(i)$ of the previous theorem and Lemma \ref{construct}. This results in larger graphs than the previous theorem, but it works also for $q=3$ and 
$5.$

\begin{theorem}
Let $q\geq 3$ be an odd prime power. Then
$$n(q+1,7)\leq 2q^3+2q^2+q+2.$$
\end{theorem}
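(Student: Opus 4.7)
The plan is to replace the appeal to Lemma \ref{legalabb6-2} in Part $(i)$ of Theorem \ref{main} by an appeal to Lemma \ref{construct}. Since $q$ is odd, $k=q+1$ is even, so the matching-based construction of Lemma \ref{construct} is available; this is what allows the bound to be obtained for the small values $q=3,5$ which fall outside the hypothesis $k\geq 7$ of Lemma \ref{legalabb6-2}, at the price of deleting fewer lines and hence ending up with a slightly larger graph.

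I would reuse the geometric setup of Part $(i)$: embed $\mathcal{Q}(4,q)$ in $\mathrm{PG}(4,q)$, and cut it with a hyperplane $\Sigma$ for which $\mathcal{H}=\mathcal{Q}(4,q)\cap\Sigma$ is a hyperbolic quadric, whose $2(q+1)$ lines and $(q+1)^2$ points already form a $(q+1)\times(q+1)$ grid inside $\mathcal{Q}(4,q)$. To match the shape required by Lemma \ref{construct}, I would simply discard one line from one of the two rulings, leaving a $(q+1)\times q$ sub-grid $\mathcal{G}$. Taking $G$ to be the entire Levi graph of $\mathcal{Q}(4,q)$, which is $(q+1)$-regular, Lemma \ref{construct} then delivers a $(q+1)$-regular graph $\Gamma$ of girth at least $7$ obtained by deleting the $q$ lines of the chosen ruling of $\mathcal{G}$ from $G$ and adding the new edges prescribed by the rectangle-free perfect matching supplied by Lemma \ref{rfree}.

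For the vertex count and the girth, the Levi graph of $\mathcal{Q}(4,q)$ has $2(q+1)(q^2+1)=2q^3+2q^2+2q+2$ vertices, and we remove exactly $q$ of them, yielding $|\Gamma|=2q^3+2q^2+q+2$, as claimed. To rule out girth $\geq 8$ I would invoke the Moore bound: $M(q+1,8)=2(q^3+q^2+q+1)$ strictly exceeds $|\Gamma|$, so $\Gamma$ cannot have girth $8$ or more, and combined with Lemma \ref{construct} its girth is exactly $7$. All the structural work has been carried out in Lemmas \ref{legalabb6}, \ref{rfree}, and \ref{construct}, so no real obstacle remains; the only thing that requires noticing is that starting from the full $(q+1)\times(q+1)$ hyperbolic-section grid and sacrificing a single line of one ruling is the right way to meet the even-$k$ hypothesis of Lemma \ref{construct} while paying only $q$ vertices on top of the $2q^3+2q^2$ of Theorem \ref{main}.
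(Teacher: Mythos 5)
Your proposal is correct and follows essentially the same route as the paper: take the hyperbolic quadric section of $\mathcal{Q}(4,q)$, drop one line of one ruling to obtain a $(q+1)\times q$ grid, apply Lemma \ref{construct} (valid since $k=q+1$ is even), and finish the vertex count and the Moore-bound argument exactly as the paper does. No gaps.
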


\begin{proof}
Let $\mathcal{P},$  $Q(4,q),$ $\mathcal{H}$ and $G$ be the same as in the 
proof of Theorem \ref{main}. Let $\ell $ be a line that is contained in 
$\mathcal{H}$. Then the lines of $\mathcal{H}$ except $\ell $ form a 
$(q+1)\times q$ grid $\mathcal{G}.$
Now, apply Lemma \ref{construct}. The constructed graph $\Gamma $ is a $(q+1)$-regular graph on $2q^3+2q^2+q+2$ vertices and its girth is at least $7$.
The order of $\Gamma $ is less than $M(q+1,8)=2(q^3+q^2+q+1),$ so its girth
is $7.$  
\end{proof}

The construction given in Part $(iii)$ of Theorem \ref{main} can 
be extended from $q-1$ to all even numbers $k<q.$
This gives a slight improvement on the general upper bound of Lazebnik,
Ustimenko and Woldar \cite{LUW}.

\begin{theorem}
Let $k\geq 4$ be an even integer and let $q$ denote the smallest prime power
for which $k\leq q.$ Then
$$n(k,7)\leq 2kq^2-q.$$

In particular, if $q$ is odd, then
$$n(q-1,7)\leq 2q^3-2q^2-q=2(q-1)^3+4(q-1)^2+(q-1)-1.$$
\end{theorem}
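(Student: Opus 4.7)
The plan is to extend Theorem~\ref{main}(ii) by starting from a $k$-arc (not the full $q$-arc) in $\Sigma_{\infty}$ and compensating for the resulting irregularity by deleting extra points. In $\mathrm{PG}(3,q)=\mathrm{AG}(3,q)\cup\Sigma_{\infty}$, I would fix a $k$-arc $\mathcal{A}$ contained in an oval $\mathcal{O}$ (for $q$ odd) or hyperoval $\mathcal{H}$ (for $q$ even), and define $\mathcal{S}$ as the geometry with points $\mathrm{AG}(3,q)$ and lines the affine lines whose point at infinity lies in $\mathcal{A}$. Its Levi graph $G_0$ is an induced subgraph of the Levi graph of $\mathcal{T}_2(\mathcal{O})$ or $\mathcal{T}_2^{*}(\mathcal{H})$, hence has girth $\geq 8$; its points have degree $k$ but its lines have degree $q$. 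I would then pick a line $m\subset\Sigma_{\infty}$ with $m\cap\mathcal{A}=\emptyset$ (e.g.\ a tangent to $\mathcal{O}$ at a point of $\mathcal{O}\setminus\mathcal{A}$) and delete the affine points of any $q-k$ of the $q$ parallel affine planes with line at infinity $m$. Each line of $\mathcal{S}$ meets each deleted plane in exactly one point (its direction not being in $m$) and thus retains $k$ points, while each surviving point still has $k$ lines through it. The resulting $G_1$ is $k$-regular on $2kq^2$ vertices with girth $\geq 8$.

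Next, take two points $A_1,A_2\in\mathcal{A}$ and the affine plane $\Pi_0$ with line at infinity $A_1A_2$. Arcness of $\mathcal{A}$ gives $\Pi_0^{\infty}\cap\mathcal{A}=\{A_1,A_2\}$, so the $q$ lines of $\mathcal{S}$ in $\Pi_0$ of direction $A_1$ and the $q$ of direction $A_2$ form a $q\times q$ grid. Delete the $q$ $A_1$-lines (as line-vertices) from $G_1$; each has $k$ surviving points, so $qk$ points drop to degree $k-1$. Restore regularity by adding a \emph{rectangle-free} perfect matching $M$ on these points: pair the $k$ remaining points on each deleted line into $k/2$ edges, chosen so that no two matched pairs $(P_{i_1,j_1},P_{i_1,j_2})$ and $(P_{i_2,j_1},P_{i_2,j_2})$ share the same $A_2$-index pair $\{j_1,j_2\}$. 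The resulting graph $\Gamma$ is $k$-regular with $2kq^2-q$ vertices.

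To verify girth $\geq 7$, imitate the proof of Lemma~\ref{construct}: in any cycle $\mathcal{C}$ of length $\leq 6$ with $s$ matching edges, for $s\leq 1$ replacing each matching edge by the $2$-path through its deleted line yields a cycle of length $\leq 7$ in $G_1$, contradicting its girth $\geq 8$. For $s\geq 2$, non-adjacency of matching edges (each vertex meets at most one) restricts $\mathcal{C}$ to the configuration $v_1v_2v_3v_4v_5v_6$ with matching edges $v_1v_2$ and $v_4v_5$ (all shorter or differently placed cases contradicting bipartiteness); then $v_1,v_2,v_4,v_5$ are affine points lying in $\Pi_0$ (they sit on deleted $A_1$-lines of $\Pi_0$), and $v_3=v_2v_4$, $v_6=v_5v_1$ are undeleted lines of $\mathcal{S}$ contained in $\Pi_0$. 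Their directions lie in $\Pi_0^{\infty}\cap\mathcal{A}=\{A_1,A_2\}$ and, being undeleted, must equal $A_2$, so $v_1v_2v_4v_5$ is an $A_1$-$A_2$-parallelogram whose matched opposite sides contradict the rectangle-freeness of $M$. A $7$-cycle in $\Gamma$ is then exhibited by the three-direction argument of Theorem~\ref{main}(ii). Specialising $k=q-1$ with $q$ odd gives $n(q-1,7)\leq 2(q-1)q^2-q=2q^3-2q^2-q$, which equals $2(q-1)^3+4(q-1)^2+(q-1)-1$ after expansion.

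The main obstacle is producing the rectangle-free matching $M$ in the presence of the ``holes'' in the grid caused by the plane-deletions: each deleted $A_1$-line retains only $k$ of its $q$ points, and the set $T_i$ of missing $A_2$-indices depends on the column $i$. This reduces to packing $q$ matchings of size $k/2$ into $E(K_q)$ subject to column-wise forbidden vertex sets $T_i$ of size $q-k$, which for a suitable choice of $m$ can be arranged as translates of a fixed $(q-k)$-subset of $\mathbb{Z}_q$. For $k=q-1$ with $q$ odd we have $|T_i|=1$ and this is precisely a near-one-factorization of $K_q$ (produced by the classical round-robin construction); for smaller $k$ I would obtain $M$ from a difference family adapted to the common coset structure of the $T_i$'s.
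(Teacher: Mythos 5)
Your overall strategy (shrink the arc to $k$ directions, delete $q-k$ parallel planes to restore line-regularity, then delete one parallel class of lines and re-match its points) matches the paper's, but you diverge at the decisive step, and that divergence creates a genuine gap. You place the deleted parallel class in a plane $\Pi_0$ whose line at infinity is the \emph{secant} $A_1A_2$, which forces a $q\times q$ grid structure and hence obliges you to produce a rectangle-free system of matchings: $q$ columns, each needing a perfect matching on its $k$ surviving points, with no unordered pair of $A_2$-indices repeated across columns. You correctly identify this as the main obstacle but do not construct it. The gap is real: the system needs $qk/2$ distinct pairs out of the $\binom{q}{2}$ available, so for $k=q$ (which the theorem covers whenever $k$ itself is an even prime power, e.g.\ $k=4,8,16$) it is impossible by counting; for $k=q-1$ it is exactly a near-one-factorization (tight, but doable as you say); and for general even $k<q-1$ with the column-dependent hole sets $T_i$, your appeal to ``a difference family adapted to the common coset structure'' is a sketch, not a proof (it can be completed for $q$ odd by pairing $x$ with $s_i-x$ for suitably chosen symmetric hole sets and distinct sums $s_i$, but the analogous trick fails in characteristic $2$ because only $q-1$ nonzero difference classes are available for $q$ columns).

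The paper avoids all of this by a different geometric choice: the deleted parallel class consists of the lines of direction $C$ lying in a plane $\Pi$ whose line at infinity is the \emph{tangent} $c$ to the oval at $C$. Then any two surviving points on distinct deleted lines span a line contained in $\Pi$, whose direction lies on $c$ and hence can only be $C$ if it is to be a direction of $\mathcal{S}$ --- but a line through two points of two distinct parallel $C$-lines cannot have direction $C$. So no two such points are collinear in $\mathcal{S}$, every potential $6$-cycle dies immediately, and an \emph{arbitrary} perfect matching of the $k$ (even) surviving points on each deleted line works; no rectangle-freeness, no Lemma \ref{rfree}, and no restriction $k<q$ beyond what is needed for the auxiliary points to exist. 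If you want to salvage your secant-plane route, you must at minimum exclude $k=q$ and supply the explicit matching system for all even $k\leq q-1$ including even $q$; switching to the tangent-plane configuration is the cleaner fix.
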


\begin{proof}
The construction is similar to the one presented in the proof of Theorem \ref{main}, Part $(ii).$ Consider $\mathrm{PG}(3,q)$ as $\mathrm{AG}(3,q)\cup \Sigma _{\infty }.$ Take an oval $\mathcal{O}$ in $\Sigma _{\infty }$. If $q$ is even, then extend it to a hyperoval $\mathcal{H}=\mathcal{O}\cup \{ N\} $ where $N$ denotes the nucleus of 
$\mathcal{O}.$ Choose a  line $e$ in $\Sigma _{\infty }$ that is external to 
$\mathcal{O},$ and does not contain $N$ if $q$ is even. Pick up four points 
$C,\, D,\, M_1, \, M_2$ of $\mathcal{O}.$  Take a line $\ell $ through $C$ and two
affine points $U_1,\, U_2$ on $\ell .$ For $i=1,2$ let $f_i$ denote
the line $U_iM_i.$ Then $f_1$ and $f_2$ are skew lines in  
$\mathrm{PG}(3,q)$ and none of them contains $C.$  
Let $g$ be the line of intersection of the two planes $\langle f_i,C\rangle .$ Then 
$g$ and $f_i$ are coplanar, so there exists the affine point of intersection 
$R_i=f_i\cap g.$ 
Choose $q-k$ parallel affine planes $\Sigma _1,\Sigma _2,\dots ,\Sigma _{q-k}$ such that for all $j=1,2,\dots ,q-k$ the line at infinity of $\Sigma _j$ is $e,$ and
$\Sigma _j\cap \{ U_1,U_2,R_1,R_2\} =\emptyset .$
Then each affine line whose point at infinity belongs to $\mathcal{O}$ intersects
$\Sigma _j$ in a unique affine point. For $q$ even, it is also true for the affine
lines whose point at infinity is $N.$ 
Finally, choose $q+1-k$ points $F_1,F_2,\dots ,F_{q+1-k}$ points of 
$\mathcal{O}\setminus \{ C,D,M_1,M_2\} .$ Since $4\leq k<q,$ the planes 
$\Sigma _j$ and the points $F_j$ exist.

Change the definition of $\mathcal{S}$ such that the set of its points consists of the 
$kq^2$ points of 
$$\mathrm{AG}(3,q)\setminus \left( \bigcup_{j=1}^{q-k}\Sigma _j \right) ,$$ 
and the lines of
$\mathcal{S}$ are those affine lines whose point at infinity belongs to the set
$$\mathcal{O}\setminus \{ F_1,F_2,\dots ,F_{q+1-k}\} .$$ 
Then the Levi graph 
$G$ of $\mathcal{S}$ is a $k$-regular induced subgraph of the Levi graph of 
$\mathcal{T}_2(\mathcal{O})$ or $\mathcal{T}_2^*(\mathcal{H}),$
according to whether $q$ is odd or even.

Let $c$ be the tangent to $\mathcal{O}$ at $C,$ and 
$\Pi $ be an affine plane whose line at infinity is $c.$ Then there are $q$ lines of 
$\mathcal{S}$ in $\Pi ,$ they form a parallel class. Hence the corresponding set of vertices in $G$ is a deleteable set, $W.$
We can apply Lemma \ref{legalabb6} to $G$ and $W.$. The constructed graph 
$\Gamma $ is a $k$-regular graph on 
$$kq^2+(kq^2-q)=2kq^2-q$$ 
vertices.

Suppose that $\mathcal{C}$ is a $6$-cycle in $\Gamma .$ Then $\mathcal{C}$ contains two new edges, both of them join two points of $G$, because there is no
new edge among the lines. So $\mathcal{C}=(E,F,v,J,K,w),$ where the capital letters denote points of $G$, and $v$ and $w$ denote lines. Hence $F$ and $J$
are collinear points of $\mathcal{S}$. The lines $EF$ and $JK$ pass on $C,$ so the line joining the points $F$ and $J$ in $\mathrm{PG}(3,q)$ does not contain $C.$ 
This implies that $U=FJ\cap \Sigma _{\infty }$
is a point of $\mathcal{O}\setminus \{ C,F_1,F_2,\dots ,F_{q+1-k}\} .$ This is a contradiction, because 
$U$ is on the line $c=\Pi \cap  \Sigma _{\infty },$ and $c$ is the tangent to 
$\mathcal{O}$ at $C$. So the girth of $\Gamma $
is at least $7.$

Finally, by definition, the vertices corresponding to the points and lines
$(U_1,U_2,f_2,R_2,g,R_1,f_1)$ form a $7$-cycle in $\Gamma $. This completes the proof.
\end{proof}

The following theorem was first proved by Abreu et al. \cite{abr+4}. They used 
$\mathcal{Q}(4,q)$ for their construction. Now, we present a much simpler proof,
which is based on exploiting the geometric properties of ${\mathcal W}(q)$.
Note that $\mathcal{Q}(4,q)$ and ${\mathcal W}(q)$ are isomorphic if $q$ is even.
\begin{theorem}
\label{paros-jobb}
Let $q\geq 4$ be an even prime power. Then
$$n(q+1,7)\leq 2q^3+q^2+2q.$$
\end{theorem}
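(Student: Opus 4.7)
I would apply Lemma~\ref{legalabb6} with $G$ taken to be the full Levi graph of $\mathcal{W}(q)$: since the order of $\mathcal{W}(q)$ is $(q,q)$ and $q$ is even, the parameter $t=q+1$ is odd and the hypothesis $s+1\geq t$ is tight. The target vertex count $2q^3+q^2+2q = 2(q+1)(q^2+1)-(q^2+2)$ indicates that I need a deleteable subset $W$ with $|W|=q^2+2$.

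My candidate for $W$ is an asymmetric ``near-star'' at a point $P_0\in\mathcal{W}(q)$. Let $\ell_1,\dots,\ell_{q+1}$ be the pencil of $q+1$ lines of $\mathcal{W}(q)$ through $P_0$, and set
\[
 W = \{P_0\}\cup\{\ell_1,\dots,\ell_{q+1}\}\cup\bigcup_{i=1}^{q-1}\bigl(\ell_i\setminus\{P_0\}\bigr),
\]
so $|W|=1+(q+1)+(q-1)q=q^2+2$. Deleteability is easy: the only intersecting pairs of lines in $W$ meet at $P_0\in W$ and $W$ already contains the whole pencil; and any two collinear points of $W$ lie on a common $\ell_i$ with $i\leq q-1$, whose line and all $q+1$ of its points are in $W$. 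The key non-cascading fact is that two non-$P_0$ points on distinct $\ell_i,\ell_j$ are \emph{not} collinear in $\mathcal{W}(q)$: by the GQ axiom, the unique line from such a point meeting $\ell_j$ must be $\ell_i$ itself, which meets $\ell_j$ at $P_0$.

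After deleting $W$, the degree-$q$ vertices split into the $2q$ non-$P_0$ points on $\ell_q\cup\ell_{q+1}$ and the $(q-1)q^2=q^3-q^2$ lines of $\mathcal{W}(q)$ meeting some $\ell_i$ (with $i\leq q-1$) at a non-$P_0$ point. Their total $q^3-q^2+2q=q(q^2-q+2)$ is even because $q$ is even, and at every deleted non-$P_0$ point the number of remaining lines in its pencil is exactly $q$, as is the number of remaining points on each of $\ell_q,\ell_{q+1}$. So by Proposition~\ref{parositashoz} a perfect matching of these degree-deficient vertices can be assembled from (i)-edges pairing non-$P_0$ points inside $\ell_q,\ell_{q+1}$ and (ii)-edges pairing lines inside each pencil at a deleted $Q\neq P_0$, making $\Gamma$ exactly $(q+1)$-regular.

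Lemma~\ref{legalabb6} gives girth $\geq 6$ and restricts $6$-cycles to four structural shapes. Three of them ultimately force two non-$P_0$ points on different $\ell_i,\ell_j$ to be collinear in $\mathcal{W}(q)$, contradicting the axiom above. The surviving shape has two line-line new edges through deleted points $Q_a\in\ell_{i_a}$ and $Q_b\in\ell_{i_b}$ with $i_a\neq i_b$; the cycle is realised iff the matching at $Q_a$ pairs the two lines $u_1Q_a,u_4Q_a$ and the matching at $Q_b$ pairs $u_1Q_b,u_4Q_b$ for some pair of non-deleted points $u_1,u_4$ simultaneously collinear with $Q_a$ and $Q_b$. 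The main obstacle is therefore to choose all pencil matchings so that no matched pair at $Q_a$ corresponds, via the GQ-canonical bijection $v\mapsto$ (unique line of $\mathcal{W}(q)$ through $Q_b$ meeting $v$), to a matched pair at $Q_b$; here one invokes the symplectic polarity of $\mathcal{W}(q)$, available precisely because $q$ is even, to endow each pencil with a canonical involution that avoids these correlated configurations. Once girth $\geq 7$ is established, the strict inequality $|V(\Gamma)|=2q^3+q^2+2q < 2(q+1)(q^2+1)=M(q+1,8)$ forces the girth to be exactly $7$.
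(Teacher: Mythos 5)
Your setup is essentially the paper's construction in dual form: the paper deletes a fixed line of $\mathcal{W}(q)$, all $q+1$ of its points, and the full pencils at $q-1$ of those points, whereas you delete a point $P_0$, all $q+1$ lines through it, and the full point-rows of $q-1$ of those lines; since $\mathcal{W}(q)$ is self-dual for $q$ even, these are equivalent, and your counts ($|W|=q^2+2$, order $2q^3+q^2+2q$, degree-deficient set of size $q^3-q^2+2q$), the deleteability check, and the reduction via Lemma~\ref{legalabb6} to a single hard type of $6$-cycle are all correct and match the paper's skeleton.

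However, there is a genuine gap at precisely the point where all the work lies. You correctly identify that after the easy cases are disposed of, the surviving $6$-cycles are those in which the two new edges are matched pairs at two deleted vertices $Q_a,Q_b$ that are ``correlated'' via the GQ-collinearity bijection, and that the whole proof hinges on choosing the matchings so that no such correlated pair exists. But you never construct these matchings: the sentence invoking ``the symplectic polarity \dots to endow each pencil with a canonical involution that avoids these correlated configurations'' is an assertion of exactly the claim that needs proof, not an argument. An arbitrary system of involutions on the pencils will in general produce $6$-cycles, so existence of a good system cannot be taken for granted. The paper resolves this by an explicit choice --- in coordinates, the matching is the shift $t\mapsto t+1$ (an involution only because $\operatorname{char}=2$, which is where evenness of $q$ actually enters) both on the two surviving pencils and on the affine points of each deleted line --- followed by a concrete computation with the symplectic form showing that if both closing lines of a putative $6$-cycle were self-conjugate then $\frac{t}{s}+\frac{s}{t}=0$, forcing $s=t$, a contradiction. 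Without an explicit matching and this verification (or a genuine structural substitute for it), your proposal does not establish girth $\geq 7$. The final step, deducing girth exactly $7$ from $|V(\Gamma)|<M(q+1,8)$, is fine.
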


\begin{proof}
The classical generalized quadrangle ${\mathcal W}(q)$ of order $(q,q)$ arises from a null-polarity $\pi$ in $\mathrm{PG}(3,q)$, i.e., a polarity given by the standard form 
$X_1Y_0-X_0Y_1+X_3Y_2-X_2Y_3=0$. All points of PG$(3,q)$ are points of 
${\mathcal W}(q)$. The lines of ${\mathcal W}(q)$ are the self-conjugate lines of 
$\mathrm{PG}(3,q)$ with respect to the null-polarity $\pi$. 
The line joining the points $A=(a_0:a_1:a_2:a_3)$ and
$B=(b_0:b_1:b_2:b_3)$ is self-conjugate if and only if  
$a_1b_0-a_0b_1+a_3b_2-a_2b_3=0$. The polar plane $A^{\pi }$ of the point $A$ has equation $a_1X_0-a_0X_1+a_3X_2-a_2X_3=0$, while the pole of the plane having equation $c_0X_0+c_1X_1+c_2X_2+c_3X_3=0$ is the point $(c_1:-c_0:c_3:-c_2).$
The self-conjugate lines through the point $A$ form a pencil of lines in the plane 
$A^{\pi }.$ For $q$ even, the equations are simpler, because in this case $-1=1.$

Let $G=(V,E)$ be the Levi graph of ${\mathcal W}(q)$. Take the subset 
$W\subset V$ that consists of the following vertices: the line $X_0=X_3=0$, all of its points, and all self-conjugate lines through the $q-1$ points $\{ (0:1:t:0) \colon 0\neq t \in \mathrm{GF}(q)\} .$  By definition, $W$ is a deleteable subset.  
Let $G'$ be subgraph of $G$ induced by the set of vertices $V\setminus W.$ Then the order of $G$ is 
$$2q^3+2q^2+2q+2-\left(( 1+(q+1)+(q-1)q\right) =2q^3+q^2+2q.$$

The polar plane of the point
$E_1$ is $\Sigma_{\infty } .$ Let $\ell _{\infty ,t}$ denote the line with set of equations $\{ X_0=0, \, X_2=tX_3 \} $. Then the set of the non-deleted self-conjugate lines through $E_1$ is 
$$\mathcal{L}_1=\{ \ell _{\infty ,t}\colon t \in \mathrm{GF}(q)\} .$$
Similarly, the polar plane of the point $E_2$ is the plane $E_2^{\pi } \colon X_3=0.$ 
If $\ell _{t, \infty }$ denote the line with set of equations $\{ X_3=0, \, X_1=tX_0 \} ,$ 
then the set of the non-deleted self-conjugate lines through $E_2$ is 
$$\mathcal{L}_2=\{ \ell _{t,\infty }\colon t \in \mathrm{GF}(q)\} .$$
We define a matching of the elements of $\mathcal{L}_1$ as 
$\ell _{\infty ,t}\Longleftrightarrow \ell _{\infty ,t+1} ,$ and a matching of the elements of $\mathcal{L}_2$ as 
$\ell _{t,\infty }\Longleftrightarrow \ell _{t+1,\infty } .$

Now, we define matchings of the non-deleted affine points on the deleted affine lines.
If $t\neq 0,$ then the equation of $(0:1:t:0)^{\pi }$ is $X_0+tX_3=0,$ it intersect
$\mathrm{AG}(3,q)$ in a horizontal plane $\mathcal{H}_{1/t}$ with equation $Z=1/t.$
The self-conjugate lines in $\mathcal{H}_{1/t}$ have point at infinity 
$(0:1:t:0)^{\pi},$ so the equations of the deleted lines in $\mathcal{H}_{1/t}$
can be written as $\ell _{t,k}\colon Y=tX+k$ where $k$ runs on the elements of 
$\mathrm{GF}(q).$
For a fixed pair $(t,k)$ we define the matching of the $q$ affine points on $\ell _{t,k}$
in the following way:
$$P\colon \left( p,tp+k,\frac{1}{t} \right) \Longleftrightarrow 
P'\colon \left( p+1,tp+t+k,\frac{1}{t} \right).
$$

Apply Lemma \ref{legalabb6} to the graph $G$ and the matchings defined in the previous two paragraphs. The extended graph $\Gamma $ is $(q+1)$-regular and its
girth is at least $6.$

Suppose that $\mathcal{C}$ is a $6$-cycle in $\Gamma .$ Then, by Lemma \ref{legalabb6}, there are three possibilities:
\begin{enumerate}
\item
$\mathcal{C}=(A,b,c,D,e,f)$,
\item
$\mathcal{C}=(A,b,c,D,E,f)$,
\item
$\mathcal{C}=(a,B,C,d,E,F)$,
\end{enumerate}
where uppercase letters denote points and lowercase letters denote lines of 
$\mathcal{W}(q).$

In Case 1, the lines $b$ and $c$ belong to $\mathcal{L}_1$ or $\mathcal{L}_2.$
In the former case, $A$ and $D$ are points of $\Sigma _{\infty },$ hence 
the lines $e$ and $f$ also belong to $\mathcal{L}_1.$ In the latter case,
$A$ and $D$ are points of $E_2^{\pi },$ hence 
the lines $e$ and $f$ also belong to $\mathcal{L}_2.$ In both cases, there were 
two lines of $\mathcal{L}_i$ through the point $A,$ a contradiction.

In Case 2, the line $c$ belongs to $\mathcal{L}_1$ or $\mathcal{L}_2.$ 
Hence the point $D$ is either in $\Sigma _{\infty }$ or in $E_2^{\pi },$ and
$DE$ is a new edge through $D.$ This is a contradiction again, because there are 
no new edges through the points of the planes $\Sigma _{\infty }$ and $E_2^{\pi }.$

Finally, in Case 3, $BC$ and $EF$ are new edges, so none of these four points are in
$\Sigma _{\infty }\cup E_2^{\pi }.$ The line $d$  joins $C$ and $E,$ and $d$ is a non-deleted self-conjugate line, hence it is not horizontal. This implies that $C$ and $E$ are in
different horizontal planes, thus without loss of generality we may assume that
$C=( a,ta+k,\frac{1}{t})$ and $E=( b,sb+m,\frac{1}{s})$ where $s\neq t.$
Then $B=( a+1,ta+t+k,\frac{1}{t} )$ and 
$F=( b+1,sb+s+m,\frac{1}{s}),$ because $BC$ and $EF$ are new edges. 
The line $d$ is self-conjugate if and only if
$$a+b + \frac{ta+k}{s} + \frac{sb+m}{t}=0,$$
while the line $a$ joins $B$ and $F,$ so it is self-conjugate if and only if
$$a +1+ b + 1 + \frac{ta+t+k}{s} + \frac{sb+s+m}{t}=0.
$$
If both lines were self-conjugate, then the two equations would imply
$$\frac{t}{s}+\frac{s}{t}=0, \text{ hence } s=t,$$
a contradiction again.

Hence there is no $6$-cycle in $\Gamma .$ The order of $\Gamma $ is less than 
$M(q+1,8),$ so its girth is $7.$ The proof is complete. 
\end{proof}

As a final note, we should mention that using similar
methods, we can construct relatively small $(q,11)$-graphs and $(q+1,11)$-graphs from the Levi graph of the split Cayley generalized hexagon of order $(q,q).$
However, this will be the subject of a subsequent paper.

\noindent
{\bf Gy\"orgy Kiss:} Department of Geometry and HUN-REN-ELTE Geometric and Algebraic Combinatorics
Research Group, E\"otv\"os Lor\'and University, 1117 Budapest, P\'azm\'any
s. 1/c, Hungary; and Faculty of Mathematics, Natural Sciences and Information Technologies, University of Primorska, Glagolja\v ska 8, 6000 Koper,
Slovenia; \\
e-mail: \texttt{gyorgy.kiss@ttk.elte.hu} \\

\end{document}